\theoremstyle{plain}
\newtheorem{theorem}{Theorem}
\newtheorem{corollary}{Corollary}
\newtheorem{lemma}{Lemma}
\newtheorem{proposition}{Proposition}
\theoremstyle{definition}
\theoremstyle{remark}
\numberwithin{equation}{section}
\begin{document}

\doublespacing
\begin{center}
{\bf\Large The 5'-3' distance of RNA secondary structures}
\\
\vspace{15pt} Hillary S.~W. Han and Christian M. Reidys$^{\,\star}$
\end{center}

\begin{center}
         Institut for Matematik og Datalogi  \\
         University of Southern Denmark \\
         Denmark\\
         Phone: *45-24409251 \\
         Fax: *45-65502325 \\
         email: duck@santafe.edu
\end{center}


\centerline{\bf Abstract}

\qquad \: Recently Yoffe {\it et al.} observed that the average distances
between $5'$-$3'$ ends of RNA molecules are very small and largely independent
of sequence length. This observation is based on numerical computations as well
as theoretical arguments maximizing certain entropy functionals.
In this paper we compute the exact distribution of $5'$-$3'$ distances of
RNA secondary structures for any finite $n$. We furthermore compute the limit
distribution and show that already for $n=30$ the exact distribution and the
limit distribution are very close. Our results show that the distances of
random RNA secondary structures are distinctively lower than those of minimum
free energy structures of random RNA sequences.

{\bf Keywords}:
RNA secondary structure, singularity analysis, noncrossing
diagram, distance.


\section{Introduction and background}

The closeness of $5'$ and $3'$ ends of RNA molecules has distinct
biological significance, for instance for the replication efficiency
of single stranded RNA viruses or the efficient translation of
messenger RNA molecules. It is speculated in \citep{Yoffe} that this
effective circularization of large RNA molecules is rather a generic
phenomenon of large RNA molecules and independent of sequence length.
It is to large extend attributed to the high number of paired bases.

In this paper we study the distribution of $5'$-$3'$ distances in RNA secondary
structures. We first compute the distribution of $5'$-$3'$ distances
of RNA secondary structures of length $n$ by means of a bivariate
generating function. The key idea is to view secondary
structures as tableaux sequences and to relate the $5'$-$3'$ distance to the
nontrivial returns \citep{Emma:decom} of the corresponding path of shapes.
Secondly, we derive the limit distribution of $5'$-$3'$ distances. The idea is to
compute the singular expansion of the above generating function via
the subcritical paradigm \citep{Flajolet:07a} and to employ a discrete
limit theorem.

Our results prove, that the $5'$-$3'$ distances of random RNA structures are
distinctively smaller than those of biological RNA molecules and
minimum free energy (mfe) RNA structures.
This comes as a surprise since the number of paired bases in random
structures is $55.2 \%$ \citep{Reidys:book} and therefore smaller than
the $60\%$ of mfe structures \citep{Schuster:93}.

An RNA structure is the helical configuration of its primary
sequence, i.e.~the sequence of nucleotides {\bf A}, {\bf G}, {\bf U}
and {\bf C}, together with Watson-Crick ({\bf A-U}, {\bf G-C}) and
({\bf U-G}) base pairs. The combinatorics of RNA secondary structures
has been pioneered by Waterman
\citep{Penner:93c,Waterman:78a,Waterman:79a,Waterman:80,Waterman:94a}.
We interpret an RNA secondary structure as a diagram, i.e.~labeled
graphs over the vertex set $[n]=\{1, \dots, n\}$, represented by
drawing its vertices $1,\dots,n$ in a horizontal line and connecting
them via the set of backbone-edges $\{(i,i+1)'\mid 1\le i\le n-1\}$.
Besides its backbone edges a diagram exhibits arcs, $(i,j)$, that are
drawn in the upper half-plane. Note that an arc of the form $(i,i+1)$
or $1$-arc, is distinguished from the backbone edge $(i,i+1)'$. However,
no confusion can arise since an RNA secondary structure is a diagram
having no $1$-arcs and only noncrossing arcs in the upper half-plane, see
Fig.\ \ref{F:secon}.\\

The $5'$-$3'$ distance of an RNA secondary structure is the minimal length of a path
of the diagram. Such a diagram-path is comprised of arcs and backbone-edges,
see Fig.\ \ref{F:distance0}.

The paper is organized as follows:
In Section~\ref{S:prelim} we discuss some basic facts, in particular
the structure-tableaux correspondence and how to express the
$5'$-$3'$ distance via such tableaux-sequences.
In Section~\ref{S:combi} we compute ${\bf W}(z,u)$, the bivariate generating
function of RNA secondary structures of length $n$ having distance $d$.
Section~\ref{S:singular} contains the computation of the singular expansion of
${\bf W}(z,u)$ and in Section~\ref{S:limit} we combine our results
and derive the limit distribution. We finally discuss our results in
Section~\ref{S:discuss}.

\section{Preliminaries}\label{S:prelim}

Let $\mathscr{S}_n$ denote the set of RNA
secondary structures of length $n$, $\sigma_n$.
All results of this paper easily generalize to the case of diagrams with
noncrossing arcs that contain no arcs of length smaller than $\lambda>1$
and to canonical secondary structures \citep{Reidys:book}, i.e.~structures that
contain no isolated arcs.

The distance of $\sigma_n$, $d_n(\sigma_n)$, is the minimum length of a path
consisting of $\sigma$-arcs and backbone-edges from vertex $1$ (the $5^{'}$ end)
to vertex $n$ (the $3^{'}$-end). That is we have the mapping $
d_n\colon \mathscr{S}_n\longrightarrow \mathbb{N}$.

A sequence of shapes $(\lambda_0, \lambda_1, \ldots, \lambda_n)$ is called a
$1$-tableaux of length $n$, $T_n$, if all shapes contain only one row of
squares and (a) $\lambda_0=\lambda_n =\varnothing$, (b) $\lambda_{i+1}$ is
obtained from $\lambda_i$ by adding a square ($+\Box$), removing a square
($-\Box$) or doing nothing ($\varnothing$) and (c) there exists no sequence of
$(+\Box, -\Box)$-steps. Let $\mathscr{T}_n$ denote the set of all $1$-tableaux
of length $n$.

We come next to the tableaux interpretation of secondary structures. The
underlying correspondence is an immediate consequence of
\citep{Reidys:vac07,Chen,Reidys:07pseu}. We shall subsequently express
the $5'$-$3'$ distance via $1$-tableaux.
\begin{proposition}\citep{Reidys:07pseu}
There exists a bijection between RNA secondary structures and $1$-tableaux:
\begin{equation}
\beta_n\colon \mathscr{S_n}_n\longrightarrow \mathscr{T}_n.
\end{equation}
\end{proposition}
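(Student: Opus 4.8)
The plan is to construct $\beta_n$ explicitly by scanning the structure from left to right and then to exhibit an inverse. Given $\sigma_n\in\mathscr{S}_n$, I would read the vertices $1,\dots,n$ in increasing order and record, after processing vertex $i$, the number $h_i$ of arcs $(a,b)$ with $a\le i<b$, i.e.\ the number of currently open arcs, together with $h_0=0$. The transition from $\lambda_{i-1}$ to $\lambda_i$ is declared to be $+\Box$ if vertex $i$ is the left endpoint of an arc, $-\Box$ if it is the right endpoint, and $\varnothing$ if $i$ is isolated. Since each vertex of a secondary structure is incident to at most one arc, exactly one of these three cases occurs, so the sequence $(\lambda_0,\dots,\lambda_n)$ of one-row shapes of sizes $h_0,\dots,h_n$ is well defined. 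The heights remain nonnegative throughout, and every arc is closed by the time we reach vertex $n$, whence $h_n=0$; thus $\lambda_0=\lambda_n=\varnothing$ and conditions (a), (b) hold.

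The content of the claim is that this assignment is a bijection, and the key structural input is the noncrossing condition. First I would check that each $-\Box$ step has a canonical partner: in a noncrossing diagram the arc terminating at vertex $i$ must be the one whose left endpoint is the most recently opened unmatched arc, since otherwise two arcs would cross. This last-in-first-out matching is exactly what forces the shapes to consist of a single row, and it lets me define $\beta_n^{-1}$: reading a $1$-tableaux left to right while maintaining a stack, I push the index of each $+\Box$ step and, at each $-\Box$ step, pop the top index $a$ and draw the arc $(a,i)$, leaving $\varnothing$-steps as isolated vertices. Because the heights are nonnegative and return to $0$, the stack empties exactly at the end, so the resulting diagram is a well-defined secondary structure whose arcs are noncrossing by construction.

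It then remains to account for condition (c) and to verify that $\beta_n$ and $\beta_n^{-1}$ are mutually inverse. A $1$-arc $(i,i+1)$ is opened at vertex $i$ and, by the last-in-first-out rule, closed at vertex $i+1$, so it corresponds precisely to a $+\Box$ step immediately followed by a $-\Box$ step; hence the prohibition of $1$-arcs in $\mathscr{S}_n$ is exactly the absence of a $(+\Box,-\Box)$ pair of consecutive steps, which is condition (c). That the two maps compose to the identity in both directions follows because both are governed by the same stack discipline. The only genuine obstacle is the rigorous justification that noncrossing is equivalent to this single-row stack matching; this is precisely the specialization of the diagram-to-vacillating-tableaux correspondence of \citep{Reidys:vac07,Chen,Reidys:07pseu}, in which arbitrary (possibly crossing) diagrams produce multi-row tableaux and the noncrossing ones collapse to the one-row case treated here. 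Invoking that correspondence makes the argument immediate.
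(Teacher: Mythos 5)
Your proof is correct and takes essentially the same approach as the paper: an explicit scan of the diagram recording the currently open arcs as a one-row shape, an inverse defined by last-in-first-out (rightmost-square) extraction whose noncrossing output is guaranteed by the stack discipline, and the identification of the no-$1$-arc condition with the prohibition of consecutive $(+\Box,-\Box)$ steps in condition (c). The only cosmetic difference is that you scan left-to-right whereas the paper scans the vertices $(n,n-1,\dots,1)$ right-to-left, which merely yields the mirror-image tableaux and does not affect the bijection.
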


\begin{proof}
Given $\sigma_n$, we consider the sequence $(n,n-1,\dots,1)$ and,
starting with $\varnothing$, do the following:\\
$\bullet$ if $j$ is the endpoint of an arc $(i,j)$, we add one square,\\
$\bullet$ if $j$ is the start point of an arc $(j, s)$, we remove one square,\\
$\bullet$ if $j$ is an isolated point, we do nothing.\\
This constructs a $1$-tableaux of length $n$ and thus defines the map $\beta_n$.
Conversely, given a $1$-tableau $T_n$, $(\varnothing,\lambda^1,\dots,
\lambda^{n-1},\varnothing)$, reading $\lambda^i\setminus\lambda^{i-1}$
from left to right, at step $i$, we do the following:\\
$\bullet$ for a $+\square$-step at $i$ we insert $i$ into the new square,\\
$\bullet$ for a $\varnothing$-step we do nothing,\\
$\bullet$ for a $-\square$-step at $i$ we extract the entry of the rightmost
          square $j(i)$.
The latter extractions generate the arc-set $\{(i,j(i))\mid i
\;\text{\rm is a $-\square$-step}\}$ that contains by definition of $T_n$
no $1$-arcs. Thus this procedure generates a secondary structure of length
$n$ without $1$-arc, which, by construction, is the inverse of
$\beta_n$ and the proposition follows.
\end{proof}

A secondary structure $\sigma_n$ is irreducible if $\beta(\sigma_n)$ is a sequence
of shapes $(\lambda_0,\dots,\lambda_{n})$ such that $\lambda_j\neq \varnothing$ for
$1\le j<n$. An irreducible substructure of $\sigma_n$ is a subsequence $(\lambda_i,
\dots, \lambda_{i+k})$ such that $\lambda_{i-1}=\varnothing$ and $\lambda_{i+k}=
\varnothing$ and $\lambda_j\neq \varnothing$ for $i\le j<i+k$.
In the following we denote the terminal shapes ($\lambda_{i+k}$) of non-rightmost
irreducibles by $\varnothing^*$ and the terminal shape of the rightmost irreducible
by $\varnothing^{\#}$. Accordingly we distinguish three types of shapes
$\varnothing,\varnothing^*$ and $\varnothing^{\#}$.
We can now express the distance in terms of numbers of $\varnothing^*$ and
$\varnothing$ shapes as follows
\begin{equation}
d_n(\sigma_n) = 2 \, \vert \{\varnothing^*\in \beta(\sigma_n)\}\vert +
\vert \{\varnothing\in \beta(\sigma_n)\}\vert.
\end{equation}


\section{Combinatorial analysis}\label{S:combi}


Let ${\mathbf w}(n,d)$ denote the number of RNA secondary structures
$\sigma_n$ having distance $d_n$. In the following we shall write $d$
instead of $d_n$ and consider
\begin{equation}
{\mathbf W}(z,u)=\sum_{n\geq 0}\sum_{d \geq 0}{\bf w}(n,d)\,z^n u^d,
\end{equation}
the bivariate generating function of the number of RNA secondary
structure of length $n$ having distance $d$ and set ${\mathbf w}(n)=\sum_{d
\geq 0} {\mathbf w}(n,d)$. Let
${\mathbf S}(z)$ denote the generating function of RNA secondary structures
and ${\bf Irr}$(z) denote the generating function of irreducible secondary
structures (irreducibles).
Let furthermore ${\mathscr S}_n$ denote the set of secondary structures of
length $n$ and ${\mathscr I}_n$ denote the set of irreducible structures of
length $n$.

\begin{theorem}\label{T:exact}
The bivariate generating function of the number of RNA secondary structures
of length $n$ with distance $d$, is given by
\begin{equation}
\begin{split}
&{\mathbf W}(z,u)=
\frac{uz^2 ({\mathbf S}(z)-1)}{(1-zu)^2-(1-zu)(zu)^2
({\mathbf S}(z)-1)}
+\frac{z}{1-zu}.
\end{split}
\end{equation}
\end{theorem}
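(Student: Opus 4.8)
The plan is to build the bivariate generating function by decomposing each secondary structure into its sequence of irreducible substructures, tracking the $5'$-$3'$ distance via the formula $d_n(\sigma_n) = 2\,|\{\varnothing^*\}| + |\{\varnothing\}|$ from the preliminaries. Let me work out the structure of this decomposition.

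Let me set up the decomposition. By definition, a secondary structure's tableau $\beta(\sigma_n)$ splits at each $\varnothing$ shape into a sequence of pieces. The three shape types $\varnothing$, $\varnothing^*$, $\varnothing^{\#}$ tell us which symbols contribute to the distance. Each isolated point (a $\varnothing$-step that stays empty at a non-decomposition position) contributes $1$ to the distance, each terminal shape $\varnothing^*$ of a non-rightmost irreducible contributes $2$, and the rightmost terminal $\varnothing^{\#}$ contributes nothing to the distance count.

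First I would extract the generating function ${\bf Irr}(z)$ of irreducibles and relate it to ${\bf S}(z)$. Since an arbitrary secondary structure is a sequence of isolated vertices and irreducibles (reading the tableau left to right, a $\varnothing$ shape is either an isolated point or a boundary between irreducibles), there is a symbolic relation ${\bf S}(z) = \frac{1}{1-z-{\bf Irr}(z)}$, which lets me solve ${\bf Irr}(z) = 1 - z - 1/{\bf S}(z)$, equivalently ${\bf Irr}(z) = (z^2({\bf S}(z)-1))/{\bf S}(z)$ after using the standard secondary-structure functional equation. The next step is to attach the variable $u$: a structure is a sequence whose atoms are either a single isolated vertex (an initial $\varnothing$) marked $zu$, or an irreducible block followed by its terminal empty shape. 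A non-rightmost irreducible contributes its own weight times $(zu)^2$ for the $\varnothing^*$ (distance $2$), while the final rightmost irreducible's terminal $\varnothing^{\#}$ is unmarked. I would therefore write ${\bf W}(z,u)$ as a geometric series in the "repeatable" atom (isolated point $zu$, or irreducible-with-$\varnothing^*$ of weight $(zu)^2\,\tfrac{{\bf Irr}(z)}{z^2}$, where I account for the length offset), then cap it with a terminal factor for the rightmost block.

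The main obstacle, and the step demanding the most care, is bookkeeping the weights so the powers of $z$ (tracking length $n$) and powers of $u$ (tracking distance $d$) come out consistently, especially reconciling the $+\Box$/$-\Box$ steps inside irreducibles with the convention that an irreducible's terminal $\varnothing^*$ carries weight $u^2$ while its internal structure carries only $z$-weight. Concretely, I would let the repeatable part be a geometric series $\sum_{k\ge 0}\bigl(zu + (zu)^2 \widetilde{{\bf Irr}}(z)\bigr)^{k}$ summing to $\frac{1}{1-zu-(zu)^2\widetilde{{\bf Irr}}(z)}$ for a suitably normalized $\widetilde{{\bf Irr}}$, multiply by the appropriate terminal factor, and then substitute ${\bf Irr}(z)=z^2({\bf S}(z)-1)$ and simplify. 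The final step is purely algebraic: clearing denominators and collecting terms should produce exactly the claimed closed form $\frac{uz^2({\bf S}(z)-1)}{(1-zu)^2 - (1-zu)(zu)^2({\bf S}(z)-1)} + \frac{z}{1-zu}$, where the second summand accounts separately for the degenerate structures consisting purely of isolated vertices. I expect verifying that this algebraic simplification reproduces the stated denominator $(1-zu)^2 - (1-zu)(zu)^2({\bf S}(z)-1)$ to be the place where a sign or exponent error is most likely to creep in, so I would cross-check by expanding ${\bf W}(z,u)$ to low order in $z$ against directly enumerated small structures.
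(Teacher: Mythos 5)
Your overall strategy is essentially the paper's own proof in different packaging: the paper also decomposes a structure into its irreducibles and top-level isolated vertices, proves ${\bf Irr}(z)=z^2({\bf S}(z)-1)$ (via an explicit tableau bijection $\gamma,\gamma^*$ rather than your functional-equation manipulation), and then counts arrangements of the blocks. Its Claim 2 — the binomial count $\binom{d-i}{i+1}$ of ways to place $i+1$ irreducibles — followed by the identity $\sum_{r\ge 0}\binom{r}{k}x^r=x^k/(1-x)^{k+1}$ is precisely the coefficient-level expansion of your geometric series, so your symbolic-method formulation is viable and, with correct weights, yields $\frac{u\,{\bf Irr}(z)}{\left(1-zu-u^2{\bf Irr}(z)\right)(1-zu)}$, which is exactly the first summand of the theorem.

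However, two of your concrete steps are wrong, and the first would make the derivation fail to produce the stated formula. (a) Your $u$-marking is off by one: you declare the rightmost terminal $\varnothing^{\#}$ unmarked and never account for the initial empty shape $\lambda_0$, which the formula $d_n(\sigma_n)=2\,|\{\varnothing^*\}|+|\{\varnothing\}|$ counts among the plain $\varnothing$'s; the paper instead bundles this $+1$ into the rightmost irreducible ("the distance-contribution of the rightmost irreducible and each isolated point is one"). Under your weights, a structure with $i$ non-rightmost irreducibles and $k$ isolated vertices receives $u^{2i+k}$, although its distance is $2i+k+1$; for instance a single irreducible spanning the whole sequence would be weighted $u^{0}$ though its distance is $1$. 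The terminal factor must therefore be $u\,{\bf Irr}(z)/(1-zu)$ — one factor of $u$ for the rightmost irreducible, and the trailing isolated vertices after it must also live in this factor — and that missing $u$ is exactly the factor $u$ in the numerator $uz^2({\bf S}(z)-1)$ of the theorem. (b) Your intermediate identity ${\bf Irr}(z)=z^2({\bf S}(z)-1)/{\bf S}(z)$ is false: from ${\bf S}(z)=1/(1-z-{\bf Irr}(z))$ and the functional equation ${\bf S}(z)=1+z{\bf S}(z)+z^2({\bf S}(z)-1){\bf S}(z)$ one obtains ${\bf Irr}(z)=1-z-1/{\bf S}(z)=z^2({\bf S}(z)-1)$ exactly, with no $1/{\bf S}(z)$; you tacitly contradict this when you later substitute the correct relation. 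Error (a) is the substantive gap — it sits at the very step you flagged as delicate, and the small-case cross-check you propose (e.g.\ length $3$ with the single arc $(1,3)$, distance $1$) would expose it immediately.
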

\begin{proof}
We set ${\mathbf V}(z,u)=z/(1-zu)$ and ${\mathbf U}(z,u)={\mathbf W}(z,u)-
{\mathbf V}(z,u)$.\\
{\it Claim 1:} ${\bf Irr}(z)=z^2\left({\mathbf S}(z)-1 \right)$.\\
To prove Claim 1 we consider the mapping
$
\gamma: {\mathscr I}_n \longrightarrow {\mathscr S}_{n-2}
$,
obtained by removing the shapes $\lambda_1$ and $\lambda_{n-1}$ from
$\beta(\sigma_n)$ and removing the rightmost box from all other shapes
$\lambda_j, 2\leq j \leq n-2$.
Note that for $1=(n-1)$ the tableaux $\beta(\sigma_n)$ corresponds to a
$1$-arc which is impossible.
Hence for an irreducible structure $\lambda_1=\Box$ and $\lambda_{n-1}=\Box$ are
distinct shapes and the induced sequence of shapes
$\mu=(\lambda_0,\lambda_2\setminus \Box,\dots,\lambda_{n-2}\setminus\Box,
\lambda_n)$ is again a $1$-tableaux, i.e.~an element of $\mathscr{S}_{n-2}$,
where $\lambda_j\setminus \Box$ denotes the shape $\lambda_j$ with the
rightmost $\Box$ deleted. Thus $\gamma$ is welldefined.
Given a $1$-tableaux $\tau=(\lambda_0,\dots,\lambda_{n-2})$ we consider the map
\begin{equation}
\gamma^*(\tau)=(\lambda_0,\Box,\lambda_1\sqcup\Box,\dots,\lambda_{n-3}
\sqcup \Box,\Box, \lambda_{n-2})
\end{equation}
where $\lambda_j\sqcup \Box$ denotes the shape $\lambda_j$ with a $\Box$
added, see Fig. ~\ref{F:mapIS}.

By construction, $\gamma^*\circ \gamma={\rm id}$, whence Claim $1$.
Let us first compute the contribution of secondary structures containing at
least one irreducible.\\
{\it Claim 2:} Suppose $\sigma_n$ has distance $d$, then $(i+1)$ irreducibles
can be arranged in exactly ${d-i \choose i+1}$ ways.\\
Indeed, in view of $d=2\, \vert \{\varnothing^*\in \beta(\sigma_n)\}\vert +
\vert \{\varnothing\in \beta(\sigma_n)\}\vert$,
the distance-contribution of the rightmost irreducible and each isolated point
is one, while the contribution of all remaining $i$ irreducibles equals two.
No two such contributions overlap, whence replacing $d$ by $d-i$ we have
${d-i\choose i+1}$ ways to place the $(i+1)$ irreducibles and Claim $2$ follows.
Accordingly, we obtain for fixed $d$
\begin{equation}
\sum_{n>d} {\mathbf u}(n,d)z^n=\sum_{i \geq 0}{d-i \choose i+1}\,
{\bf Irr}(z)^{i+1} z^{d-2i-1},
\end{equation}
where the indeterminant $z$ corresponds to the isolated points and
${\bf Irr}(z)$ represents the irreducible structures labeled by the
$\varnothing^{*}$ and $\varnothing^{\sharp}$.
Consequently, rearranging terms we derive
\begin{equation}
{\mathbf U}(z,u)=\sum_{d\geq 1}\sum_{n>d}\,{\mathbf u}(n,d)z^n u^d
=\sum_{i \geq 0}\sum_{d \geq 1} {d-i \choose i+1}\,
{\bf Irr}(z)^{i+1} z^{d-2i-1}\, u^d
\end{equation}
and therefore
\begin{equation}
\begin{split}
{\mathbf U}(z,u)
&=\sum_{i \geq 0}\sum_{d \geq 1} {d-i \choose i+1}\, (zu)^{d-i} (z)^{-i-1}\,
u^i\, {\bf Irr}(z)^{i+1} \\
&=
\sum_{i \geq 0}\sum_{d \geq 1} {d-i \choose i+1}\, (zu)^{d-i} \,
\left(\frac{u \,{\bf Irr}(z)}{z}\right)^i \, \frac{{\bf Irr}(z)}{z}.
\end{split}
\end{equation}
Using $\sum_{r \geq 0} {r \choose k}\,x^r= \frac{x^k}{(1-x)^{k+1}}, k \geq 0$,
we compute
\begin{equation*}
\begin{split}
{\mathbf U}(z,u)&=\sum_{i \geq 0}\,\frac{(zu)^{i+1}}{(1-zu)^{i+2}}\,
\left(\frac{u {\bf Irr}(z)}{z} \right)^i \, \frac{{\bf Irr}(z)}{z}\\
&=\frac{1}{1-\frac{zu}{1-zu}\frac{u{\bf Irr}(z)}{z}}\,
\frac{zu{\bf Irr}(z)}{z(1-zu)^2}\\
&=\frac{uz^2({\bf S}(z)-1)}{(1-zu)^2-(1-zu)z^2u^2({\bf S}(z)-1)}.
\end{split}
\end{equation*}
It remains to consider RNA secondary structures that contain no irreducibles,
i.e.~RNA secondary structures consisting exclusively of isolated vertices.
Clearly,
\begin{equation}
{\bf V}(z,u)= \sum_{n \geq 1} z^n u^{n-1}=
\frac{z}{1-zu}
\end{equation}
and the proof of the theorem is complete.
\end{proof}

Setting ${\bf p}(n,d)={\bf w}(n,d)/{\bf w}(n)$, Theorem~\ref{T:exact}
provides the distribution of distances for RNA secondary structures of any
fixed length, $n$, see Tab.\ \ref{Tab:30}.


\section{The singular expansion}\label{S:singular}


In this section we analyze the asymptotics of the $n$th coefficient,
$[z^n]{\bf W}(z,u)$. This will play a crucial role for the computation
of the limit distribution of distances in Section~\ref{S:limit}.

Let us first establish some facts needed for deriving the singular expansion:
\begin{lemma}
${\bf W}(z,u)$ is algebraic over the rational
function field $\mathbb{C}(z,u)$ and has the unique
dominant singularity, $\rho=(3-\sqrt{5})/2$,
which coincides with the unique dominant singularity
of ${\bf S}(z)$.
\end{lemma}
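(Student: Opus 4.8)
The plan is to establish three things: algebraicity of ${\bf W}(z,u)$, the location of the dominant singularity at $\rho=(3-\sqrt5)/2$, and that this coincides with the dominant singularity of ${\bf S}(z)$. The starting point is the explicit closed form from Theorem~\ref{T:exact}, together with the well-known fact that the generating function ${\bf S}(z)$ of RNA secondary structures (noncrossing, $1$-arc-free diagrams) satisfies a quadratic equation. First I would recall or derive that ${\bf S}(z)$ is algebraic: it satisfies $z^2{\bf S}(z)^2-(z^2-z+1){\bf S}(z)+1=0$ (the standard Waterman equation for $1$-arc-free secondary structures), so that ${\bf S}(z)=\frac{(z^2-z+1)-\sqrt{(z^2-z+1)^2-4z^2}}{2z^2}$, whose branch-point is exactly where the discriminant $(z^2-z+1)^2-4z^2$ vanishes. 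Factoring the discriminant as $(z^2+z+1)(z^2-3z+1)$ shows the relevant real positive root is $\rho=(3-\sqrt5)/2$, the smaller root of $z^2-3z+1=0$; this is the unique dominant singularity of ${\bf S}(z)$.

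Once ${\bf S}(z)$ is known to be algebraic, algebraicity of ${\bf W}(z,u)$ is essentially formal. From Theorem~\ref{T:exact}, ${\bf W}(z,u)$ is a rational function of $z$, $u$, and ${\bf S}(z)$; since the algebraic functions over $\mathbb{C}(z,u)$ form a field, and ${\bf S}(z)$ is algebraic over $\mathbb{C}(z)\subseteq\mathbb{C}(z,u)$, any rational expression in ${\bf S}(z)$ with coefficients in $\mathbb{C}(z,u)$ is again algebraic over $\mathbb{C}(z,u)$. So I would simply substitute the quadratic relation for ${\bf S}(z)$ into the formula and note that the closure property gives a polynomial relation $P(z,u,{\bf W})=0$ with coefficients in $\mathbb{C}(z,u)$.

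The substantive part is pinning down the \emph{dominant} singularity and confirming it does not move away from $\rho$. Singularities of ${\bf W}(z,u)$ can come from two sources: the branch-point of ${\bf S}(z)$ at $z=\rho$, and zeros of the denominator $(1-zu)^2-(1-zu)(zu)^2({\bf S}(z)-1)$ (together with the pole of $z/(1-zu)$ at $zu=1$). The claim is that for the purpose of extracting $[z^n]$ — i.e.\ treating $u$ as a fixed auxiliary parameter near $1$ — the branch-point $\rho$ of ${\bf S}(z)$ dominates. The key step is to check that at $z=\rho$ the denominator does not vanish, so that no spurious polar singularity of smaller modulus is created, and that the candidate polar singularities (solutions of the denominator equation) all have modulus strictly greater than $\rho$ for $u$ in the relevant range. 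This is precisely the hypothesis of the subcritical paradigm \citep{Flajolet:07a}: the outer (rational) composition is analytic at the value ${\bf S}(\rho)$ attained at the inner singularity, so the singularity of the composed function is inherited from and located at the inner singularity $\rho$.

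The main obstacle I anticipate is the verification of subcriticality: I must evaluate the denominator at $z=\rho$ using the known value ${\bf S}(\rho)$ (obtained from the quadratic, where the discriminant vanishes, giving ${\bf S}(\rho)=\frac{\rho^2-\rho+1}{2\rho^2}$) and confirm it is nonzero, and then argue uniformly in $u$ near $1$ that the denominator's zeros stay outside the disk of radius $\rho$. Establishing \emph{uniqueness} of the dominant singularity additionally requires ruling out other singularities on the circle $|z|=\rho$; here I would invoke aperiodicity/positivity of the coefficients of ${\bf S}(z)$ (a Pringsheim-type argument plus the fact that $\rho$ is the unique real positive branch-point, the other discriminant factor $z^2+z+1$ having only complex roots of modulus $1>\rho$) to conclude $\rho$ is the sole singularity of minimal modulus. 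Everything else is routine algebra on the explicit formulas.
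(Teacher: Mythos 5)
Your proposal is correct and follows essentially the same route as the paper: algebraicity via the explicit formula of Theorem~\ref{T:exact} together with the degree-two relation (square-root form) for ${\bf S}(z)$, and the location of the dominant singularity by checking that the pole $z=1/u$ and the zeros of $(1-zu)^2-(1-zu)(zu)^2({\bf S}(z)-1)$ lie outside $|z|\le\rho$ for the admissible range of $u$. The only difference is one of presentation: the paper dismisses the denominator analysis as ``straightforward'' and carries out the subcriticality computation ($g(\rho,u)<1$) only later, in the proof of Lemma~\ref{L:erni1}, whereas you correctly fold that verification into the proof of this lemma.
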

\begin{proof}
The fact that ${\bf W}(z,u)$ is algebraic over the rational
function $\mathbb{C}(z,u)$ follows immediately from
Theorem~\ref{T:exact} where we proved
\begin{equation*}
\begin{split}
&{\mathbf W}(z,u)=
\frac{uz^2 ({\mathbf S}(z)-1)}{(1-zu)^2-(1-zu)(zu)^2
({\mathbf S}(z)-1)}
+\frac{z}{1-zu},
\end{split}
\end{equation*}
since evidently all nominators and denominators are polynomial
expressions in $u$ and $z$ and
\begin{equation}\label{E:root}
{\bf S}(z)=\frac{1-z+z^2-\sqrt{(z^2 + z + 1) (z^2 - 3 z + 1)}}{2z^2}.
\end{equation}
Thus the field $\mathbb{C}(z,u)[{\bf S}(z)]$ is algebraic of degree two over
$\mathbb{C}(z,u)$.
The second assertion follows from $u\in (0,1)$ and a straightforward analysis
of the singularities of the two denominators $(1-zu)^2-(1-zu)(zu)^2
({\mathbf S}(z)-1)$ and $(1-zu)$.
\end{proof}

Given two numbers $\phi,r$, where $r>|\kappa|$ and $0<\phi<\frac{\pi}{2}$, the
open domain $\Delta_\kappa(\phi,r)$ is defined as
\begin{equation*}
\Delta_\kappa(\phi,r)=\{ z\mid \vert z\vert < r, z\neq \kappa,\,
\vert {\rm Arg}(z-\kappa)\vert >\phi\}.
\end{equation*}
A domain is a $\Delta_\kappa$-domain\index{$\Delta_\kappa$-domain} at
$\kappa$ if it
is of the form $\Delta_\kappa(\phi,r)$ for some $r$ and $\phi$. A
function is $\Delta_\kappa$-analytic\index{$\Delta_\kappa$-analytic} if
it is analytic in some $\Delta_\kappa$-domain.

Suppose an algebraic function has a unique singularity $\kappa$.
According to \citep{Flajolet:07a,Stanley:80}
such a function is $\Delta_\kappa(\phi,r)$-analytic.
In particular, ${\bf W}(z,u)$ is $\Delta_\rho(\phi,r)$-analytic.
We introduce the notation
\begin{eqnarray*}
\left(f(z)=o\left(g(z)\right) \
\text{\rm as $z\rightarrow \kappa$}\right)\  &\Longleftrightarrow& \
\left(f(z)/g(z)\rightarrow 0\ \text{\rm as $z\rightarrow \kappa$}\right),
\end{eqnarray*}
and if we write $f(z)=o\left(g(z)\right)$ it is implicitly
assumed that $z$ tends to the (unique) singularity.
The following transfer theorem allows us to obtain the asymptotics of
the coefficients from the generating functions.
\begin{theorem}\label{T:transfer1b}{\bf }\citep{Flajolet:07a}
{Let $f(z)$ be a $\Delta_{\kappa}$-analytic function at its unique singularity
$z=\kappa$. Let $g(z)\in \{(\kappa-z)^{\alpha}\mid \alpha \in \mathbb{R}\}$.
Suppose we have in the intersection of a neighborhood of $\kappa$ with the
$\Delta_{\kappa}$-domain
\begin{equation*}
f(z) =  o(g(z)) \quad \text{\it for } z\rightarrow \kappa.
\end{equation*}
Then we have
\begin{equation*}
[z^n]f(z)= o\left([z^n]g(z)\right).
\end{equation*}}
\end{theorem}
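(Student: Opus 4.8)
The plan is to prove the statement by the standard singularity-analysis machinery of \citep{Flajolet:07a}, extracting coefficients through a Cauchy integral over a Hankel-type contour adapted to the $\Delta_\kappa$-domain. First I would normalise the singularity to $1$: replacing $f(z)$ by $f(\kappa z)$ and $g(z)$ by $g(\kappa z)=(\kappa-\kappa z)^{\alpha}=\kappa^{\alpha}(1-z)^{\alpha}$ multiplies every coefficient by the same factor $\kappa^{n}$, which leaves the relation $[z^n]f=o([z^n]g)$ unaffected. After this reduction it suffices to treat $g(z)=(1-z)^{\alpha}$ and to invoke the reference estimate $[z^n](1-z)^{\alpha}\sim n^{-\alpha-1}/\Gamma(-\alpha)$, the basic singular scale, which furnishes the benchmark growth rate against which $[z^n]f$ is to be measured.

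Next I would write $[z^n]f(z)=\frac{1}{2\pi i}\oint_{\gamma}f(z)\,z^{-n-1}\,dz$ and, using the $\Delta_{1}$-analyticity of $f$, deform $\gamma$ onto a contour made of a small inner arc of radius $1/n$ about $1$, two rectilinear segments leaving it along the directions $\arg(z-1)=\pm\theta$ with $\phi<\theta$ chosen so that the segments remain inside the $\Delta_{1}$-domain, and a large outer arc of radius $r'\in(1,r)$. The point of taking the inner radius equal to $1/n$ is that along the segments the substitution $z=1+t/n$ turns the integral of $g(z)\,z^{-n-1}$ into a convergent Gamma-type integral, reproducing exactly the order $n^{-\alpha-1}$.

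I would then run the little-$o$ argument. Fix $\varepsilon>0$; by hypothesis there is $\delta>0$ with $|f(z)|\le\varepsilon\,|g(z)|$ for $z$ in the $\Delta_{1}$-domain satisfying $|z-1|<\delta$. I split the contour into the inner portion $\{|z-1|<\delta\}$ and the outer remainder. On the outer remainder $f$ is analytic and bounded on a compact set while $|z|\ge r'>1$, so that part contributes $O((r')^{-n})=o(n^{-\alpha-1})$. On the inner portion I bound $|f|\le\varepsilon|g|$ and estimate $\oint|g(z)|\,|z|^{-n-1}\,|dz|$ piecewise: the inner arc has length $\Theta(1/n)$, there $|g|=\Theta(n^{-\alpha})$ and $|z|^{-n}=O(1)$, giving $O(\varepsilon\,n^{-\alpha-1})$; the two segments, after the substitution $z=1+t/n$, reduce to $\varepsilon\,n^{-\alpha-1}$ times convergent integrals of the form $\int_{0}^{\infty}t^{\alpha}e^{-ct}\,dt$, again $O(\varepsilon\,n^{-\alpha-1})$. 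Combining the three bounds yields $|[z^n]f(z)|\le C\varepsilon\,n^{-\alpha-1}+o(n^{-\alpha-1})$ with $C$ independent of $\varepsilon$; letting $n\to\infty$ and then $\varepsilon\to0$ gives $[z^n]f(z)=o(n^{-\alpha-1})=o([z^n]g(z))$, and undoing the normalisation restores general $\kappa$.

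The main obstacle is the estimation on the rectilinear parts of the Hankel contour: one must show that replacing $f$ by $\varepsilon g$ produces an integral of the \emph{same} order $\Theta(n^{-\alpha-1})$ as $[z^n]g(z)$, rather than something larger. This is precisely the delicate step that re-derives the basic singularity-analysis scale, requiring the change of variables $z=1+t/n$, a dominated-convergence control of the tails where $|z|$ grows and $|z|^{-n}$ decays, and uniform bounds in the opening of the $\Delta_{1}$-domain; carrying the factor $\varepsilon$ through these estimates is exactly what upgrades the big-$O$ transfer to the asserted little-$o$ statement.
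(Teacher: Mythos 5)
The paper offers no proof of this statement at all: it is quoted as a known transfer theorem from the cited reference (Flajolet and Sedgewick), so there is no internal argument to compare yours against. Your reconstruction is precisely the standard Flajolet--Odlyzko proof found in that reference --- normalize $\kappa$ to $1$, deform the Cauchy coefficient integral onto a Hankel-type contour with inner radius $1/n$, apply the little-$o$ hypothesis on the portion of the contour within distance $\delta$ of the singularity, and show the remainder is exponentially small --- and it is correct in substance. Two minor slips you should repair. First, on the part of the contour outside $\{z : |z-1|<\delta\}$ your bound $|z|\ge r'>1$ is false on the stretches of the rectilinear segments lying between distance $\delta$ from the singularity and the outer arc; what does hold there is $|z|\ge 1+\delta\cos\theta>1$, which still yields an exponentially small, hence $o(n^{-\alpha-1})$, contribution, so the structure of the argument survives. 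Second, your comparison integral $\int_{0}^{\infty}t^{\alpha}e^{-ct}\,dt$ diverges at $0$ when $\alpha\le-1$; since the inner radius is $1/n$, the substitution $z=1+te^{i\theta}/n$ actually produces $\int_{1}^{\delta n}t^{\alpha}e^{-ct}\,dt$, which is bounded by $\int_{1}^{\infty}t^{\alpha}e^{-ct}\,dt<\infty$ for every real $\alpha$, so the lower limit should be $1$, not $0$. A final remark on the statement itself: as reproduced in the paper it tacitly requires $\alpha\notin\mathbb{Z}_{\ge 0}$ (otherwise $g$ is a polynomial, $[z^n]g=0$ for large $n$, and the conclusion is vacuous or false); in the paper's application $\alpha=1/2$, so nothing is lost, and your appeal to $[z^n](1-z)^{\alpha}\sim n^{-\alpha-1}/\Gamma(-\alpha)$ is valid exactly under that same restriction.
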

In addition, according to \citep{Flajolet:05} we have for
$\alpha\in\mathbb{C}\setminus \mathbb{Z}_{\le 0}$:
\begin{eqnarray}\label{E:33}
[z^n]\, (1-z)^{-\alpha} & \sim & \frac{n^{\alpha-1}}{\Gamma(\alpha)}\left[
1+\frac{\alpha(\alpha-1)}{2n}+ O\left(\frac{1}{n^2}\right)\right].
\end{eqnarray}
We next observe ${\bf W}(z,u)=h(z,u)\, f(g(z,u))$,
where $g(z,u)=(uz^2({\bf S}(z)-1))/(1-uz)$, $f(z)=z/(1-uz)$,
$h(z,u)=1/(1-zu)$ and $t(z,u)=uz^2/(1-uz)$.
In preparation for the proof of Lemma~\ref{L:erni1}
we set
\begin{equation*}
\begin{split}
\alpha &=  g(\rho,u)=\frac{2(-2+\sqrt{5})u}{2+(-3+\sqrt{5})u}\\
C_0 &= \frac{2}{2-(3-\sqrt{5})u}\,\\
&\left(f(\alpha)+\frac{d f(w)}{d w}|_{w=\alpha}\,t(\rho,u)
\,\frac{\sqrt{5}-1}{3-\sqrt{5}}-\alpha\,\frac{d f(w)}{d w}|_{w=\alpha}
+\rho\right)\\
r(\rho,u) &= - \frac{2}{2-(3-\sqrt{5})u}\,\frac{d f(w)}{d w}|_{w=\alpha}\,
t(\rho,u)\,\frac{\sqrt{8(3\sqrt{5}-5)}}{(-3+\sqrt{5})^2}.
\end{split}
\end{equation*}
Furthermore, let $v(z)$ and $w(z)$ be $D$-finite power series such that $w(0)=0$
and let $\rho_v$, $\rho_w$ denote their respective radius of convergence.
We set $\tau_w =\lim_{z \rightarrow \rho_w^{-}} w(z)$ and call
the $D$-finite power series $F(z) = v(w(z))$ subcritical if and only if
$\tau_w < \rho_v$.

\begin{lemma}\label{L:erni1}
The singular expansion of ${\bf W}(z,u)$ at its unique, dominant singularity
$\rho$ is given by
\begin{equation}
{\bf W}(z,u)=C_0+{\bf V}(\rho,u)+ r(\rho,u)(\rho-z)^{1/2}+O(\rho-z).
\end{equation}
\end{lemma}
\begin{proof}
Since $g(0,u)=0$, the composition $f(g(z,u))$ is well defined as a formal
power series and ${\bf V}(z,u)=\frac{z}{1-zu}$ as well as $h(z,u)$ are
regular at $\rho$.
Since $u \in (0,1)$ we have $1/u> 1> \rho$, whence the dominant
singularity
of $g(z,u)$ equals $\rho$. Next we observe
\begin{equation*}
g(\rho,u)=\frac{u(1-\rho-\rho^2)}{2(1-u\rho)}<
               \frac{0.7 u}{2(1-0.4u)}=\frac{0.35u}{1-0.4u}<1,
\end{equation*}
whence $f(g(z,u))$ is governed by the subcritical paradigm. \\
{\it Claim $1$.}
\begin{equation}
g(z,u)=t(\rho,u)\,\frac{2}{3-\sqrt{5}}- t(\rho,u)\,
\frac{\sqrt{8(3\sqrt{5}-5)(\rho-z)}}{(-3+\sqrt{5})^2} +O(\rho-z).
\end{equation}
To prove the Claim we consider the singular expansion of ${\bf S}(z)$ at
$\rho$
\begin{equation}
{\bf S}(z)=\frac{2}{3-\sqrt{5}}-\frac{\sqrt{8(3\sqrt{5}-5)
(\rho-z)}}{(-3+\sqrt{5})^2}+O(\rho-z).
\end{equation}
The singular expansion
of $g(z,u)$ at $\rho$ is obtained by multiplying the regular expansion of
$t(z,u)$ and singular expansion of ${\bf S}(z)-1$. Clearly,
\begin{equation}
t(z,u)=t(\rho,u)-\frac{d t(z,u)}{dz}|_{z=\rho} \,(\rho-z)+O((\rho-z)^2),
\end{equation}
where $t(\rho,u)=(7-3\sqrt{5})u/(2-(3-\sqrt{5})u)$. Thus
\begin{equation}
g(z,u)=t(\rho,u)\,\frac{\sqrt{5}-1}{3-\sqrt{5}}- t(\rho,u)\,
\frac{\sqrt{8(3\sqrt{5}-5)(\rho-z)}}{(-3+\sqrt{5})^2}+O(\rho-z).
\end{equation}
Setting $\alpha=g(\rho,u)=2(-2+\sqrt{5})u/(2+(-3+\sqrt{5})u)$,
the regular expansion of $f(w)$ at $\alpha$ is
\begin{equation}
f(w) =
f(\alpha)+\frac{d f(w)}{dw}|_{w=\alpha}\,(w-\alpha)-O(w-\alpha),
\end{equation}
where $\frac{d f(w)}{dw}|_{w=\alpha}=
\left(\frac{2+(-3+\sqrt{5})u}{2+(-3+\sqrt{5})u-
2(-2+\sqrt{5})u^2}\right)^2$,
and accordingly
\begin{equation}
f(g(z,u)) = C_1-
\frac{d f(w)}{dw}|_{w=\alpha}\,t(\rho,u)\,\frac{\sqrt{8(3\sqrt{5}-5)(\rho-z)}}
{(-3+\sqrt{5})^2}+O(\rho-z),
\end{equation}
where
$C_1=f(\alpha)+\frac{d f(w)}{dw}|_{w=\alpha}\,t(\rho,u)\,\frac{\sqrt{5}-1}{3-
\sqrt{5}}-\alpha\,\frac{d f(w)}{dw}|_{w=\alpha}$.
Multiplying by the regular expansion of $h(z,u)$ at $\rho$ and adding
the regular expansion of ${\bf V}(z,u)$ implies the lemma.
\end{proof}

\section{The limit distribution}\label{S:limit}

In this Section we shall prove that for any finite $d$ holds
\begin{equation}
\lim_{n\to\infty}\frac{{\bf w}(n,d)}{{\bf w}(n)}={\bf q}(d).
\end{equation}
We furthermore determine the limit distribution via computing the
power series
\begin{equation}
{\bf Q}(u)=\sum_{d\ge 1}{\bf q}(d)u^d.
\end{equation}
Theorem~\ref{T:continuity} below ensures that under certain conditions the
point-wise convergence of probability generating functions implies
the convergence of its coefficients.
\begin{theorem}{\label{T:continuity}}
Let $u$ be an indeterminate and $\Omega$ be a set contained in the
unit disc, having at least one accumulation point in the interior of
the disc. Assume ${\bf P}_n(u)=\sum_{d\ge 0}{\bf p}(n,d)u^d$ and
${\bf Q}(u)=\sum_{d\ge 0}{\bf q}(d) u^k$ such that\\
$\lim_{n\rightarrow \infty}{\bf P}_n(u)={\bf Q}(u)$ for each $u\in\Omega$ holds.
Then we have for any finite $d$,
\begin{equation}
\lim_{n\rightarrow\infty}{\bf p}(n,d)={\bf q}(d) \quad \ \text{\it and }\quad \
\lim_{n\rightarrow \infty}\sum_{j\le d}{\bf p}(n,j)=\sum_{j\le d}{\bf q}(j).
\end{equation}
\end{theorem}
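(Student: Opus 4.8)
The plan is to treat each ${\bf P}_n(u)=\sum_{d\ge0}{\bf p}(n,d)u^d$ as a genuine holomorphic function on the open unit disc $D$ and to propagate the pointwise convergence on $\Omega$ to the whole disc by a normal-families argument. The crucial structural input is that, for each fixed $n$, the numbers ${\bf p}(n,d)$ form a probability distribution: since ${\bf p}(n,d)={\bf w}(n,d)/{\bf w}(n)\ge 0$ and $\sum_{d\ge0}{\bf p}(n,d)=1$, the series ${\bf P}_n$ has radius of convergence at least one and obeys the uniform bound $|{\bf P}_n(u)|\le\sum_{d\ge0}{\bf p}(n,d)\,|u|^{d}\le 1$ for all $|u|\le 1$ and all $n$. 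Hence $\{{\bf P}_n\}_{n\ge0}$ is a family of functions holomorphic on $D$ that is uniformly bounded on $D$, and in particular locally uniformly bounded.

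First I would invoke the Vitali--Porter theorem: a locally uniformly bounded sequence of holomorphic functions on a domain $D$ that converges pointwise on a subset of $D$ possessing an accumulation point in $D$ converges locally uniformly on all of $D$ to a holomorphic limit. By hypothesis $\Omega\subset D$ has an accumulation point in the interior and ${\bf P}_n\to{\bf Q}$ pointwise on $\Omega$. Consequently there is a function $\Phi$, holomorphic on $D$, with ${\bf P}_n\to\Phi$ uniformly on every compact subset of $D$ and $\Phi={\bf Q}$ on $\Omega$. Since $\Phi$ and the power series ${\bf Q}(u)=\sum_{d\ge0}{\bf q}(d)u^d$ agree on a set with an interior accumulation point, the identity theorem forces $\Phi\equiv{\bf Q}$; in particular ${\bf Q}$ is holomorphic on $D$ and its Taylor coefficient of order $d$ equals ${\bf q}(d)$.

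Next I would extract coefficients by contour integration. Fixing any radius $0<r<1$, Cauchy's formula gives
\[
{\bf p}(n,d)=[u^d]{\bf P}_n(u)=\frac{1}{2\pi i}\oint_{|u|=r}\frac{{\bf P}_n(u)}{u^{d+1}}\,du .
\]
Because ${\bf P}_n\to\Phi$ uniformly on the circle $|u|=r$, the limit passes through the contour integral, so that $\lim_{n\to\infty}{\bf p}(n,d)=[u^d]\Phi(u)=[u^d]{\bf Q}(u)={\bf q}(d)$, which is the first assertion. The second assertion is then immediate: for fixed finite $d$ the cumulative sum $\sum_{j\le d}{\bf p}(n,j)$ is a \emph{finite} sum of sequences each converging to ${\bf q}(j)$, whence $\lim_{n\to\infty}\sum_{j\le d}{\bf p}(n,j)=\sum_{j\le d}{\bf q}(j)$.

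The argument is essentially a packaging of a classical continuity principle, so the step I expect to require the most care is the precise verification of the hypotheses of Vitali's theorem rather than any hard estimate. Concretely, one must confirm that the accumulation point of $\Omega$ genuinely lies in the open disc, so that it is a point of analyticity and not merely a boundary point, and one must recognize that it is exactly the probabilistic normalization $\sum_d {\bf p}(n,d)=1$ that supplies the uniform boundedness: without the bound $|{\bf P}_n|\le 1$ the family need not be normal and pointwise convergence on $\Omega$ would not propagate to $D$. The identity theorem, matching the coefficients of the locally uniform limit $\Phi$ to the prescribed ${\bf q}(d)$, is then the mechanism that converts functional convergence into coefficient-wise convergence.
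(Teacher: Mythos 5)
The paper gives no proof of Theorem~\ref{T:continuity} at all: it is quoted as the classical continuity theorem for probability generating functions (Theorem IX.1 in \citep{Flajolet:07a}), so there is no internal argument to compare yours against. Your proposal is exactly the standard textbook proof of that theorem: the probabilistic normalization $\sum_d {\bf p}(n,d)=1$ gives the uniform bound $|{\bf P}_n(u)|\le 1$ on the disc, Vitali--Porter upgrades pointwise convergence on $\Omega$ to locally uniform convergence on $D$, and Cauchy coefficient extraction converts functional convergence into coefficient convergence; the finite-sum argument for the cumulative statement is also correct.

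One step should be tightened. You write that the identity theorem ``forces $\Phi\equiv{\bf Q}$; in particular ${\bf Q}$ is holomorphic on $D$,'' but holomorphy of ${\bf Q}$ on a domain containing the accumulation point is a \emph{hypothesis} of the identity theorem, not a consequence of it. From the literal hypotheses you only know that the series $\sum_d {\bf q}(d)u^d$ converges at the points of $\Omega$, which gives radius of convergence at least $|z_0|$, where $z_0$ is the accumulation point. If that radius equals $|z_0|>0$ exactly, the agreement set accumulates only at a \emph{boundary} point of ${\bf Q}$'s disc of convergence and the identity theorem does not apply; indeed, taking ${\bf Q}=\Phi+E$ with $E(u)=\sin\bigl(\pi/(1-u/z_0)\bigr)$, whose Taylor series has radius $|z_0|$ and whose zeros $z_0(1-1/k)$ accumulate at $z_0$, shows that the statement is actually false without some further assumption on the ${\bf q}(d)$. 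The gap closes at once using the hypothesis implicit in the paper's phrase ``probability generating functions'': if ${\bf q}(d)\ge 0$ and $\sum_d{\bf q}(d)\le 1$ (or even just if the ${\bf q}(d)$ are bounded), then ${\bf Q}$ has radius of convergence at least $1$, hence is holomorphic on all of $D$, and your identity-theorem step is then valid verbatim. With that one clause made explicit, your proof is complete and is the intended argument.
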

Let ${m}_1(u)=(-7 + 3 \sqrt{5}) u$ and
$$
{m}_2(u)=-2 -2 (-3 + \sqrt{5}) u + (-15 + 7 \sqrt{5}) u^2 +
(22 - 10 \sqrt{5}) u^3 +2 (-9 + 4 \sqrt{5}) u^4.
$$

\begin{theorem}\label{T:limit}
For any $d\ge 1$ holds
\begin{equation}
\lim_{n\to\infty}{\bf p}(n,d)
=\lim_{n\to\infty}\frac{{\bf w}(n,d)}{{\bf w}(n)}={\bf q}(d),
\end{equation}
where ${\bf q}(d)$ is given via the probability generating
function ${\bf Q}(u)$
\begin{equation}
{\bf Q}(u)=\frac{{m}_1(u)}{{m}_2(u)}.
\end{equation}
\end{theorem}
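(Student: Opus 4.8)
The plan is to obtain the limit distribution by combining the singular expansion of ${\bf W}(z,u)$ from Lemma~\ref{L:erni1} with the transfer theorems and the discrete continuity Theorem~\ref{T:continuity}. First I would recall that the probability generating function of the distance for fixed length $n$ is ${\bf P}_n(u)=\sum_{d}{\bf p}(n,d)u^d = [z^n]{\bf W}(z,u)\,/\,[z^n]{\bf W}(z,1)$, since ${\bf w}(n)={\bf w}(n,\cdot)$ summed over $d$ equals $[z^n]{\bf W}(z,1)=[z^n]{\bf S}(z)$. Thus the whole computation reduces to extracting the asymptotics of $[z^n]{\bf W}(z,u)$ for a fixed $u$ in a suitable set $\Omega$ and of $[z^n]{\bf W}(z,1)$, and then forming their quotient.

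The key step is to read off the dominant contribution to $[z^n]{\bf W}(z,u)$ from the singular expansion. By Lemma~\ref{L:erni1} the expansion at $\rho$ has the form ${\bf W}(z,u)=\bigl(C_0+{\bf V}(\rho,u)\bigr)+r(\rho,u)(\rho-z)^{1/2}+O(\rho-z)$; the constant term contributes nothing to the $n$th coefficient for $n\ge 1$, the $O(\rho-z)$ term is $o\bigl((\rho-z)^{1/2}\bigr)$ and hence transfers to an $o([z^n](\rho-z)^{1/2})$ error by Theorem~\ref{T:transfer1b}, so the leading behaviour is governed entirely by the square-root term. Using \eqref{E:33} with $\alpha=-1/2$ after rescaling $z\mapsto z/\rho$, I would obtain $[z^n]{\bf W}(z,u)\sim r(\rho,u)\,\rho^{1/2}\,\rho^{-n}\,n^{-3/2}/\Gamma(-1/2)$, and the same formula with $u=1$ for the denominator. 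The polynomial prefactors $\rho^{-n}n^{-3/2}/\Gamma(-1/2)$ and the factor $\rho^{1/2}$ are identical in numerator and denominator, so in the quotient everything cancels except the singular coefficients, giving the clean limit
\begin{equation*}
\lim_{n\to\infty}{\bf P}_n(u)=\frac{r(\rho,u)}{r(\rho,1)}.
\end{equation*}
It then remains to simplify this ratio of the explicit expressions for $r(\rho,u)$ given before Lemma~\ref{L:erni1}; after clearing the common factors $\tfrac{2}{2-(3-\sqrt5)u}\cdot\frac{\sqrt{8(3\sqrt5-5)}}{(-3+\sqrt5)^2}$ against their $u=1$ counterparts and inserting $\tfrac{df}{dw}|_{w=\alpha}$ and $t(\rho,u)$, this quotient should collapse to the stated rational function ${m}_1(u)/{m}_2(u)$.

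Finally I would invoke Theorem~\ref{T:continuity}: the functions ${\bf P}_n(u)$ are probability generating functions, hence analytic on the unit disc, and they converge pointwise to ${\bf Q}(u)=r(\rho,u)/r(\rho,1)={m}_1(u)/{m}_2(u)$ on a set $\Omega\subset(0,1)$ with an interior accumulation point; the theorem then upgrades this to coefficient-wise convergence ${\bf p}(n,d)\to{\bf q}(d)$ for every fixed $d$, which is exactly the claim. The main obstacle I anticipate is neither the transfer step nor the continuity step, both of which are essentially mechanical given the cited results, but rather the algebraic simplification of $r(\rho,u)/r(\rho,1)$ into the advertised closed form ${m}_1(u)/{m}_2(u)$: one must carefully expand the squared derivative $\bigl(\tfrac{df}{dw}|_{w=\alpha}\bigr)^2$ together with $t(\rho,u)$, track the various $\sqrt5$-factors, and verify that the denominator really factors as the quartic ${m}_2(u)$ while the numerator reduces to the linear ${m}_1(u)$. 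A secondary point to check is that ${\bf Q}(u)$ is a genuine probability generating function, i.e.\ ${\bf Q}(1)=1$, which serves as a useful consistency test: substituting $u=1$ into ${m}_1/{m}_2$ should yield $1$ and thereby confirm the normalization.
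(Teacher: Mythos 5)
Your proposal is correct and follows essentially the same route as the paper: the singular expansion from Lemma~\ref{L:erni1}, the transfer Theorem~\ref{T:transfer1b} to discard the $O(\rho-z)$ term, the coefficient asymptotics from eq.~(\ref{E:33}), the cancellation of all $u$-independent factors in the quotient $[z^n]{\bf W}(z,u)/[z^n]{\bf W}(z,1)$, and finally Theorem~\ref{T:continuity} applied on $\Omega=(0,1)$. The only cosmetic difference is that the paper rewrites $r(\rho,u)$ as $\frac{m_1(u)}{m_2(u)}$ times a $u$-independent constant before forming the quotient (implicitly using $m_1(1)=m_2(1)$), whereas you simplify the ratio $r(\rho,u)/r(\rho,1)$ afterwards; your normalization check ${\bf Q}(1)=1$ verifies exactly this identity.
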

\begin{proof}
According to Lemma~\ref{L:erni1}, the singular expansion of
${\bf W}(z,u)$ is given by
\begin{equation}
{\bf W}(z,u)=C_0 + {\bf V}(\rho,u)+ r(\rho,u)(\rho-z)^{1/2}+O(\rho-z).
\end{equation}
Thus
\begin{equation}
[z^n]{\bf W}(z,u)=r(\rho,u)\, [z^n]\,(\rho-z)^{1/2} + [z^n]\,O(\rho-z).
\end{equation}
In view of $O(z-\rho)=o((z-\rho)^{1/2})$, Theorem~\ref{T:transfer1b}
implies
\begin{equation}
[z^n]{\bf W}(z,u)\sim r(\rho,u)\,[z^n]\,(\rho-z)^{1/2}.
\end{equation}
Employing eq.~(\ref{E:33}) we obtain
\begin{equation}
[z^n]{\bf W}(z,u)\sim r(\rho,u) \, K \, n^{-3/2}\,\rho^{-n}(1+O(\frac{1}{n})),
\end{equation}
for some constant $K>0$. Substituting for $r(\rho,u)$ we arrive at
\begin{equation*}
[z^n]{\bf W}(z,u)=
\frac{{m}_1(u)}{{ m}_2(u) } \cdot \frac{2\sqrt{6\sqrt{5}-10}}{(-3+\sqrt{5})^2} \cdot
K \,  n^{-3/2}\rho^{-n}(1+O(\frac{1}{n}))
\end{equation*}
and in particular for $u=1$
\begin{equation*}
[z^n]{\bf W}(z,1)=\frac{2\sqrt{6\sqrt{5}-10}}{(-3+\sqrt{5})^2} \cdot
K\,  n^{-3/2}\, \rho^{-n}(1+O(\frac{1}{n})).
\end{equation*}
We consequently have
\begin{equation}\label{E:converge}
\begin{split}
\lim_{n \rightarrow \infty}\frac{[z^n]{\bf W}(z,u)}{[z^n]{\bf W}(z,1)}
=\frac{{m}_1(u)}{{m}_2(u)}.
\end{split}
\end{equation}
Therefore, setting ${\bf P}_n(u)=\sum_d{\bf p}(n,d)u^d$,
\begin{equation}
\lim_{n\to\infty}{\bf P}_n(u)={\bf Q}(u).
\end{equation}
Since $u \in (0,1)$, $0$ is an accumulation point of $\Omega=(0,1)$,
and eq.~(\ref{E:converge}) holds for each $u \in \Omega$,
Theorem~\ref{T:continuity} implies for any finite $d$
\begin{equation}
\lim_{n\to \infty} {\bf p}(n,d)=\lim_{n\to\infty}\frac{{\bf w}(n,d)}{{\bf w}(n)}=
{\bf q}(d).
\end{equation}
\end{proof}

We finally compute the asymptotic expression of ${\bf q}(d)$. For this purpose
we recall that the density function of a $\Gamma{(\lambda, r)}$-distribution is
given by
\begin{equation}
f_{\lambda, r}(x)=
\begin{cases}
\frac{\lambda^r}{\Gamma{(\lambda)}}\,x^{r-1} e^{-\lambda x}, \quad x>0 \\
0, \quad x \geq 0
\end{cases}
\end{equation}
where $\lambda >0$ and $r>0$.

\begin{corollary}\label{C:1}
Let $\rho$ be the real positive dominant singularity of ${\bf S}(z)$
and set $\delta=\frac{1}{4}(-1 - \sqrt{5} + \sqrt{38+18\sqrt{5}})$. Then
\begin{equation*}
 {\bf q}(d)\sim \frac{C_3}{\delta}(d+1)(\frac{1}{\delta})^{d+1}
 =\frac{C_3}{\delta}\,(\ln(\delta))^{-2}\, f_{\ln(\delta),2} (d).
\end{equation*}
That is, in the limit of large distances the coefficient ${\bf q}(d)$
is determined by the density function of a $\Gamma(\ln \delta, 2)$-distribution.
\end{corollary}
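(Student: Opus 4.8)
The plan is to exploit that $\mathbf{Q}(u)=m_1(u)/m_2(u)$ is a \emph{rational} function, so that the asymptotics of its Taylor coefficients $\mathbf{q}(d)=[u^d]\mathbf{Q}(u)$ are entirely governed by the pole of smallest modulus together with its order. First I would locate the dominant singularity of $\mathbf{Q}$ by factoring the denominator $m_2(u)$, and then read off the polynomial prefactor of $\delta^{-d}$ from the \emph{order} of that pole. A double pole is exactly what produces the linear factor $(d+1)$ appearing in the claim, hence the shape parameter $r=2$ of the limiting Gamma density; this is the structural heart of the statement.

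The key step is therefore the factorization of $m_2$. I would verify the algebraic identity
\begin{equation*}
m_2(u)=\frac{4\sqrt{5}-9}{2}\,\bigl(2u^{2}+(1+\sqrt{5})u-(4+2\sqrt{5})\bigr)^{2}
\end{equation*}
by comparing the five coefficients on both sides. This exhibits $m_2$ as a constant multiple of a perfect square, so each of its two roots is a \emph{double} root. The positive root of the inner quadratic $q(u)=2u^{2}+(1+\sqrt{5})u-(4+2\sqrt{5})$ is precisely $\delta=\tfrac14(-1-\sqrt{5}+\sqrt{38+18\sqrt{5}})\approx 1.40$, while its other root is negative of modulus $\approx 3.02$. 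Hence $\delta$ is the unique root of $m_2$ of smallest modulus, and since $m_1(\delta)=(3\sqrt{5}-7)\delta\neq 0$ no cancellation with the numerator occurs, so $\delta$ is a genuine dominant double pole of $\mathbf{Q}$. I expect this factorization, together with confirming that $\delta$ dominates, to be the main obstacle: recognizing that $m_2$ is a constant times a perfect square (rather than a generic quartic with four simple roots) is what forces the double pole, and hence the $(d+1)$ factor.

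With the pole structure in hand, the coefficient asymptotics follow from the standard transfer for meromorphic (here rational) functions. The local Laurent expansion $\mathbf{Q}(u)=c_{-2}(u-\delta)^{-2}+c_{-1}(u-\delta)^{-1}+\cdots$, with $c_{-2}=2\,m_1(\delta)/m_2''(\delta)$, yields
\begin{equation*}
\mathbf{q}(d)\sim c_{-2}\,(d+1)\,\delta^{-d-2}=C_3\,(d+1)\,\delta^{-d-2}=\frac{C_3}{\delta}(d+1)\Bigl(\frac1\delta\Bigr)^{d+1},
\end{equation*}
with $C_3=c_{-2}$; the contribution of the simple-pole term is smaller by a factor $d^{-1}$ and that of the farther double pole is exponentially smaller, so the displayed term is indeed the leading asymptotics.

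Finally I would recast this as a Gamma density. Since $\delta>1$ we have $\ln\delta>0$, and because $\Gamma(2)=1$ the density reads $f_{\ln\delta,2}(x)=(\ln\delta)^{2}\,x\,e^{-x\ln\delta}=(\ln\delta)^{2}\,x\,\delta^{-x}$. Using $d+1\sim d$ as $d\to\infty$ this gives $(d+1)\,\delta^{-d}\sim(\ln\delta)^{-2}f_{\ln\delta,2}(d)$, so that $\mathbf{q}(d)$ is, up to the stated constant, the value at $d$ of a $\Gamma(\ln\delta,2)$ density, which is the assertion of the corollary.
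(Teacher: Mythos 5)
Your proof is correct. The paper actually states Corollary~\ref{C:1} without giving any proof, so there is no argument to compare against line by line; but your route --- verifying that $m_2(u)$ is a constant multiple of the perfect square $\bigl(2u^{2}+(1+\sqrt{5})u-(4+2\sqrt{5})\bigr)^{2}$, so that $\delta$ is the dominant \emph{double} pole of the rational function ${\bf Q}(u)=m_1(u)/m_2(u)$ with no cancellation from $m_1$, and then transferring to coefficients via the partial-fraction expansion --- is precisely the argument the statement presupposes, since the prefactor $(d+1)$ and the shape parameter $2$ of the Gamma density are exactly the signature of a dominant double pole. One detail you handled appropriately: the displayed ``equality'' in the corollary holds only up to a constant factor, because $(\ln\delta)^{-2}f_{\ln\delta,2}(d)=d\,\delta^{-d}$ whereas $(d+1)\delta^{-(d+1)}\sim d\,\delta^{-d}/\delta$ (exact equality would require evaluating the density at $d+1$); this is an imprecision in the paper's statement, not a gap in your proof, and your ``up to the stated constant'' reading is the right one.
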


\section{Discussion}\label{S:discuss}

The results of this paper suggest that the number of base pairs alone is not
sufficient to explain the distribution of $5'$-$3'$ distances. Surprisingly,
we find that the $5'$-$3'$ distances of random are much smaller than those of
mfe-structures, despite the fact that they contain a lesser number of base
pairs, see Fig.\ \ref{F:kkk}.

By definition, only irreducibles and isolated vertices contribute to the
$5'$-$3'$ distance. The particular number of base pairs contained within
irreducible substructures is irrelevant.
It has been shown in \citep{Emma:09} that there exists a limit distribution
for the number of irreducibles in random RNA secondary structures. This
limit distribution is a determined by a $\Gamma$-distribution similar to
Corollary~\ref{C:1}.
As a result, random RNA secondary structures have only very few irreducibles,
typically two or three. This constitutes a feature shared by RNA mfe-structures.
Thus in case of random and mfe-structures a few irreducibles ``cover'' almost
the entire sequence since the $5'$-$3'$ distance is, even in the limit of large
sequence length, finite. The distinctively larger $5'$-$3'$ distance of
mfe-structures consequently stems from the fact that their irreducibles cover
a distinctively smaller fraction of the sequence.
Hence the irreducibles of mfe-structures differ in a subtle way from those of
random RNA structures. We show in the following that the shift of the
$5'$-$3'$ distance is a combinatorial consequence of large stacks observed
in mfe-structures, see Fig.~\ref{F:kk1}.

Here a stack of length $r$ is a maximal sequence of
``parallel'' arcs, $((i,j),(i+1,j-1),\dots,(i+(r-1),j-(r-1)))$.
RNA secondary structures with stack length $\ge r$ is called
$r$-canonical RNA secondary structures.
Let ${\mathbf w}_r(n,d)$ denote the number of $r$-canonical RNA
secondary structures $\sigma_{r,n}$ having distance $d_n$.
We shall write $d$ instead of $d_n$ and consider
\begin{equation}
{\mathbf W}_r(z,u)=\sum_{n\geq 0}\sum_{d \geq 0}{\bf w}_r(n,d)\,z^n u^d,
\end{equation}
the bivariate generating function of the number of RNA secondary
structure with minimum stack-size $r$ of length $n$ having distance $d$
and set ${\mathbf w}_r(n)=\sum_{d \geq 0} {\mathbf w}_r(n,d)$. Let
${\mathbf S}_{r}(z)$ denote the generating function of $r$-canonical
RNA secondary structures. Set
\begin{equation}\label{E:P}
\begin{split}
p_r(z)=&(z^{2r}-(z-1)(z^{2r}-z^2+1))^2-4z^{2r}(z^{2r}-z^2+1).
\end{split}
\end{equation}
Then the generating function of $r$-canonical secondary structures is given
by
\begin{equation}\label{E:root1}
{\bf S}_r(z) =  \frac{(z^{2r}-(z-1)(z^{2r}-z^2+1)-\sqrt{p_r(z)})}{2z^{2r}}.
\end{equation}
and we can derive it using symbolic enumeration \citep{Flajolet:07a}.

\begin{theorem}\label{T:exact2}
The bivariate generating function of the number of $r$-canonical
RNA secondary structures of length $n$ with distance $d$, is given by
\begin{equation}
\begin{split}
&{\mathbf W}_r(z,u)=
\frac{u\, z^{2r} ({\mathbf S}_r(z)-1)}{(1-zu)^2\,(1-z^2+z^{2r})
-(1-zu)u^2\, z^{2r}
({\mathbf S}_r(z)-1)}
+\frac{z}{1-zu}.
\end{split}
\end{equation}
\end{theorem}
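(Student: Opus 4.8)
The plan is to repeat the proof of Theorem~\ref{T:exact} almost verbatim, the only genuinely new input being the generating function ${\bf Irr}_r(z)$ of $r$-canonical irreducibles. Since a structure built entirely from isolated vertices is $r$-canonical for every $r$, the term ${\bf V}(z,u)=z/(1-zu)$ is unchanged, and I again set ${\mathbf U}_r(z,u)={\mathbf W}_r(z,u)-{\bf V}(z,u)$, so that ${\mathbf U}_r$ accounts for the structures containing at least one irreducible.

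The decisive step is the $r$-canonical analogue of Claim~1, namely
\begin{equation*}
{\bf Irr}_r(z)=\frac{z^{2r}\bigl({\mathbf S}_r(z)-1\bigr)}{1-z^2+z^{2r}}.
\end{equation*}
I would establish this by peeling off the outermost maximal stack. Every irreducible has outermost arc $(1,n)$; let $(1,n),(2,n-1),\dots,(s,n-s+1)$ be the maximal run of parallel arcs continuing it, so that $s\ge r$ by $r$-canonicity. This stack occupies $2s$ vertices and, summed over $s\ge r$, contributes $z^{2r}/(1-z^2)$. Deleting it leaves a nonempty $r$-canonical structure on the $n-2s$ inner vertices (nonempty since the innermost arc cannot be a $1$-arc) which, by maximality of the stack, carries no arc joining its first and last vertex. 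Now a structure carries such an endpoint-joining arc exactly when that arc is its outermost arc, i.e.\ exactly when it is irreducible; hence, writing $B_r(z)$ for the nonempty $r$-canonical structures without an endpoint-joining arc, one has ${\mathbf S}_r=1+{\bf Irr}_r+B_r$ and $B_r={\mathbf S}_r-1-{\bf Irr}_r$. The peeling is a bijection, so ${\bf Irr}_r=\frac{z^{2r}}{1-z^2}\bigl({\mathbf S}_r-1-{\bf Irr}_r\bigr)$, and solving this linear relation gives the formula. As a check, via ${\mathbf S}_r=1/(1-z-{\bf Irr}_r)$ the same identity reproduces the quadratic $z^{2r}{\mathbf S}_r^2-\bigl(z^{2r}-(z-1)(z^{2r}-z^2+1)\bigr){\mathbf S}_r+(1-z^2+z^{2r})=0$, of which eq.~(\ref{E:root1}) is precisely the root.

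The remaining steps transfer mechanically. The arrangement count of Claim~2 is purely topological: the rightmost irreducible and each isolated vertex each add $1$ to the distance while every other irreducible adds $2$, independently of the stack condition, so $(i+1)$ irreducibles in a structure of distance $d$ can still be arranged in $\binom{d-i}{i+1}$ ways. With isolated points marked by $z$ and irreducibles by ${\bf Irr}_r(z)$,
\begin{equation*}
{\mathbf U}_r(z,u)=\sum_{i\ge0}\sum_{d\ge1}\binom{d-i}{i+1}{\bf Irr}_r(z)^{i+1}\,z^{d-2i-1}u^d,
\end{equation*}
and the same use of $\sum_{m\ge0}\binom{m}{k}x^m=x^k/(1-x)^{k+1}$ collapses this to ${\mathbf U}_r=u{\bf Irr}_r/\bigl((1-zu)^2-(1-zu)u^2{\bf Irr}_r\bigr)$. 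Substituting the formula for ${\bf Irr}_r$ and clearing the common factor $1-z^2+z^{2r}$ yields the claimed expression for ${\mathbf U}_r$, and adding ${\bf V}(z,u)=z/(1-zu)$ recovers ${\mathbf W}_r(z,u)$.

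The main obstacle is exactly the computation of ${\bf Irr}_r$: the distance bookkeeping and the subsequent algebra are identical to Theorem~\ref{T:exact}, but the extra factor $1-z^2+z^{2r}$ in the denominator appears only once the maximality of the outermost stack is enforced correctly, i.e.\ once the interior is restricted to the class $B_r$ rather than to the full family of $r$-canonical structures. All the care therefore lies in the stack-peeling bijection and in the identification $B_r={\mathbf S}_r-1-{\bf Irr}_r$, everything after which is routine.
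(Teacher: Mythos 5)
Your proof is correct and follows exactly the route the paper intends: Theorem~\ref{T:exact2} is stated in the paper without proof (only the remark that it follows along the lines of Theorem~\ref{T:exact}), and your argument is precisely that adaptation, with the one genuinely new ingredient---the stack-peeling identity ${\bf Irr}_r(z)=z^{2r}\bigl({\bf S}_r(z)-1\bigr)/(1-z^2+z^{2r})$, obtained by removing the maximal outer stack ($s\ge r$ arcs, contributing $z^{2r}/(1-z^2)$) and restricting the interior to nonempty $r$-canonical structures without an endpoint-joining arc---carried out correctly, where for $r>1$ one indeed must peel the whole stack rather than a single arc as in Claim~1 of Theorem~\ref{T:exact}. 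Substituting this into the unchanged relation ${\mathbf U}_r(z,u)=u\,{\bf Irr}_r(z)/\bigl((1-zu)^2-(1-zu)u^2{\bf Irr}_r(z)\bigr)$ yields the stated formula, and your consistency check that the lemma together with ${\mathbf S}_r=1/(1-z-{\bf Irr}_r)$ reproduces the quadratic underlying eq.~(\ref{E:root1}) is a valid confirmation of the key step.
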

Along the lines of our analysis subsequent to Theorem~\ref{T:exact}
we can then obtain the singular expansion and the limit distributions
for the $5'$-$3'$ distances of $r$-canonical RNA secondary structures,
see Fig.~\ref{F:kkk}.

\section{ Acknowledgments.}
We are grateful to Thomas J. X. Li for carefully reading the manuscript
and special thanks for Fenix W.D.~Huang for generating Figure.~\ref{F:kk1}
and Emeric Deutsch for pointing out an error in Theorem.~\ref{T:exact2}
in the discussion.















\begin{thebibliography}{00}
\doublespacing

\bibitem[Chen {\em et~al.}(2008)]{Reidys:vac07}
Chen, W. Y. C., Qin, J., Reidys, C. M. (2008).
\newblock Crossings and nestings of tangled diagrams.
\newblock Elec. J. Comb. {\bf 15}, {\#}86.

\bibitem[Chen {\em et~al.}(2007)]{Chen}
Chen, W. Y. C., Deng, E. Y. P., Du, R. R. X., et al. (2007).
\newblock Crossings and nestings of matchings and partitions.
\newblock Trans. Amer. Math. Soc. {\bf 359(4)},  1555--1575.

\bibitem[Jin {\em et~al.}(2008)]{Reidys:07pseu}
Jin, E. Y., Qin, J., Reidys, C. M. (2008).
\newblock Combinatorics of {RNA} structures with pseudoknots.
\newblock Bull. Math. Biol. {\bf 70}, 45--67.

\bibitem[Jin and Reidys(2010a)]{Emma:09}
Jin, E. Y., Reidys, C. M. (2010a).
\newblock Irreducibility in RNA structures.
\newblock Bull. Math. Biol. {\bf 72},
 375--399.

\bibitem[Jin and Reidys(2010b)]{Emma:decom}
Jin, E. Y., Reidys, C. M. (2010b).
\newblock On the decomposition of k-noncrossing {RNA} structures.
\newblock Adv. Appl. Math. {\bf 44(1)}, 53--70.


\bibitem[Flajolet {\em et~al.}(2005)]{Flajolet:05}
Flajolet, P., Fill, J. A., Kapur, N. (2005).
\newblock Singularity analysis, hadamard products, and
  tree recurrences.
\newblock J. Comp. Appl. Math. {\bf 174}, 271--313.

\bibitem[Flajolet and Sedgewick(2009)]{Flajolet:07a}
Flajolet, P., Sedgewick, R. (2009).
\newblock Analytic Combinatorics. Cambridge University Press, New York.

\bibitem[Fontana {\em et~al.}(1993)]{Schuster:93}
Fontana, W., Konings, D. A. M., Stadler, P. F. (1993).
\newblock Statistics of {RNA} secondary structures.
\newblock Biopolymers {\bf 33(9)}, 1389--1404.


\bibitem[Howell {\em et~al.}(1980)]{Waterman:80}
Howell, J. A., Smith, T. F., Waterman, M. S. (1980).
\newblock Computation of generating functions for
biological molecules.
\newblock SIAM J. Appl. Math. {\bf 39}, 119--133.


\bibitem[Penner and Waterman(1993)]{Penner:93c}
Penner, R. C., Waterman, M. S. (1993).
\newblock Spaces of {RNA} secondary structures.
\newblock Adv. Math. {\bf 101},
31--49.

\bibitem[Reidys(2011)]{Reidys:book}
Reidys, C. M. (2011).
\newblock Combinatorial Computational Biology of {RNA}.
\newblock Springer-Verlag, New York.


 \bibitem[Stanley(1980)]{Stanley:80}
Stanley, R. P. (1980).
\newblock Differentiably finite power series. Europ.
\newblock J. Comb. {\bf 1},
  175--188.


\bibitem[Waterman(1979)]{Waterman:79a}
Waterman, M. S. (1979).
\newblock Combinatorics of {RNA} hairpins and cloverleafs.
\newblock Stud. Appl. Math.
  {\bf 60}, 91--96.


\bibitem[Waterman(1978)]{Waterman:78a}
Waterman, M. S. (1978).
\newblock Secondary structure of single-stranded nucleic acids.
\newblock Adv. Math. (Suppl. Studies) {\bf 1}, 167--212.


\bibitem[Waterman and Schmitt(1994)]{Waterman:94a}
Waterman, M. S., Schmitt, W. R. (1994).
\newblock Linear trees and {RNA} secondary structure.
\newblock Discr. Appl. Math. {\bf 51}, 317--323.

\bibitem[Yoffe {\em et~al.}(2011)]{Yoffe}
Yoffe, A. M., Prinsen, P., Gelbart, W. M., et al. (2011).
\newblock The ends of a large {RNA} molecule are necessarily close.
\newblock Nucl. Acid. Res. {\bf 39(1)}, 292--299.

\end{thebibliography}

\newpage
\begin{table}[ht]
\renewcommand{\arraystretch}{0.5}
\setlength{\tabcolsep}{1pt}
\begin{center}
\def\temptablewidth{2\textwidth}
\begin{tabular}{|c|c|c|c|c|c|c|c|}
\hline
\hline
$d$ & \small{1} &\small{2} &\small{3} &\small{4} &\small{5} & \small{6}    \\
\small{$\mathbf{p}(n,d)$}  & \small{0.161} & \small{0.129} & \small{0.148} &  \small{0.126} & \small{0.109} & \small{ 0.088}  \\
\hline
$d$  & \small{7} & \small{8} & \small{9} &\small{10} & \small{11} &\small{12}   \\
\small{${\bf p}(n,d)$} & \small{0.069} & \small{$5.18\times10^{-2}$} & \small{$3.8\times10^{-2}$} & \small{$2.71\times10^{-2}$} & \small{$1.87\times10^{-2}$} & \small{$1.26\times10^{-2}$}  \\
\hline
$d$  &\small{13} & \small{14} & \small{15} &\small{16} & \small{17} & \small{18} \\
\small{${\bf p}(n,d)$}   & \small{$8.22\times10^{-3}$} & \small{$5.19\times10^{-3}$} & \small{$3.17\times10^{-3}$} &
\small{$1.86\times10^{-3}$} & \small{$1.05\times10^{-3}$}& \small{$5.62\times10^{-4}$}  \\
\hline
$d$  & \small{19} & \small{20}  &\small{21} & \small{22} & \small{23} &\small{24}  \\
\small{${\bf p}(n,d)$} & \small{$2.85\times10^{-4}$} & \small{$1.36\times10^{-4}$}
& \small{$5.99\times10^{-5}$} &
\small{$2.41\times10^{-5}$} & \small{$8.58\times10^{-6}$} & \small{$2.63\times10^{-6}$}\\
\hline
 $d$   & \small{25} & \small{26} & \small{27} & \small{28} & \small{29} & \\
\small{${\bf p}(n,d)$}  & \small{$6.56\times10^{-7}$} & \small{$1.24\times10^{-7}$} & \small{$1.64\times10^{-8}$} & \small{$1.30\times10^{-9}$} & \small{$4.65\times10^{-11}$} & \\
\hline
\hline
\end{tabular}
\end{center}
\vspace*{1pt}
\caption{\small The distribution of distances of RNA secondary
structures of length $30$. The data of this table are represented
in Fig.\ \ref{F:curve} as ``$+$''.}
\label{Tab:30}
\end{table}


\newpage
\begin{figure}[ht]
\centerline{\epsfig{file=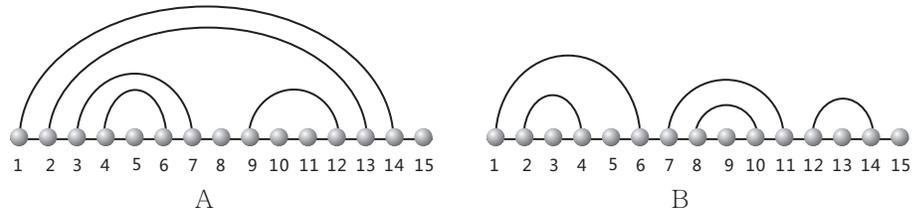,width=0.8\textwidth}
\hskip8pt}
\caption{\small RNA secondary structures as diagrams:
the backbone of the RNA molecule is drawn as a horizontal line and
Watson-Crick base pairs are represented as arcs in the upper half-plane.
An RNA secondary structure has no $1$-arcs and only noncrossing arcs.
}\label{F:secon}
\end{figure}

\begin{figure}[ht]
\centerline{\epsfig{file=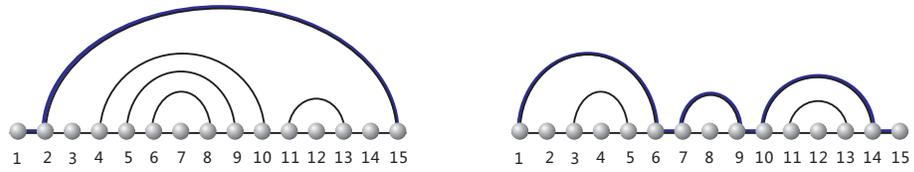,width=0.8\textwidth}
\hskip8pt}
\caption{\small The $5'$-$3'$ distance of RNA secondary structures:
distance contributing backbone-edges and arcs are drawn in blue.
The structure on the lhs has $5'$-$3'$ distance $2$ and structure on the rhs has
$5'$-$3'$ distance $6$.
}\label{F:distance0}
\end{figure}
\begin{figure}[ht]
\centerline{%
\epsfig{file=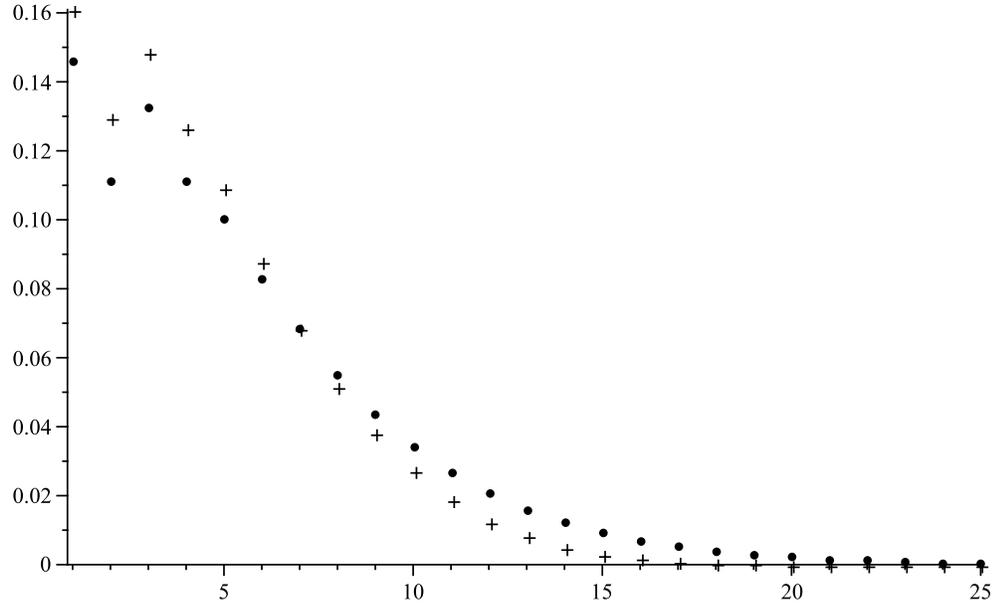,width=0.9\textwidth}\hskip15pt }
\caption{\small The distribution of $5'$-$3'$ distances of RNA secondary
structures: We display the distribution of distances in RNA secondary
structures of length $30$ ($+$) derived via Theorem~\ref{T:exact}.
We furthermore show the distribution of distances in the limit of
long RNA secondary structures ($\bullet$) obtained via
Theorem~\ref{T:limit}.} \label{F:curve}
\end{figure}
\begin{figure}[ht]
\centerline{%
\epsfig{file=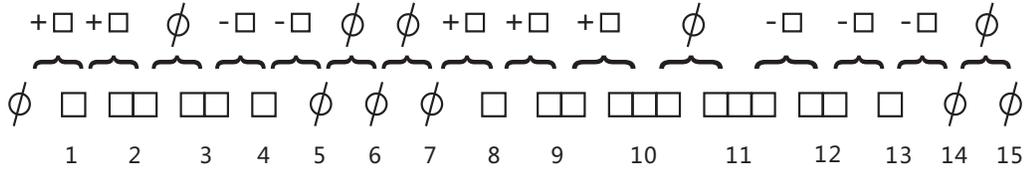,width=0.9\textwidth}\hskip15pt }
\caption{\small A $1$-tableaux: at each step either nothing happens or
a single $\Box$ is added or removed.} \label{F:ttt}
\end{figure}

\begin{figure}[ht]
\centerline{%
\epsfig{file=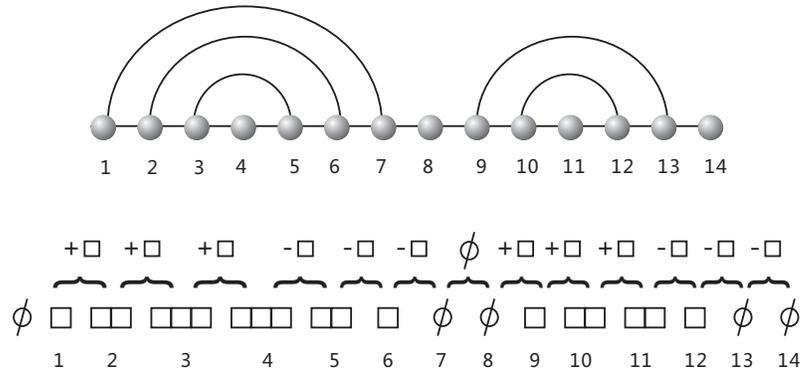,width=0.7\textwidth}\hskip15pt }
\caption{\small Mapping RNA secondary structures into
$1$-tableaux.} \label{F:bijection}
\end{figure}

\begin{figure}[ht]
\centerline{%
\epsfig{file=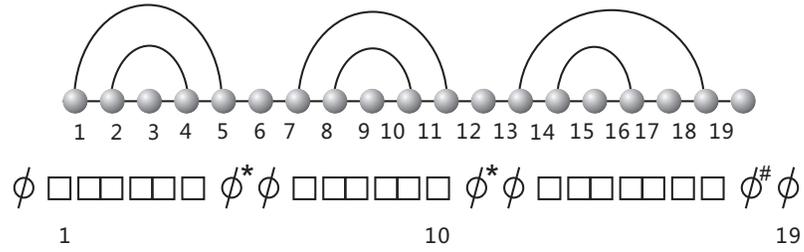,width=0.7\textwidth}\hskip15pt }
\caption{\small A secondary structure and its a $1$-tableaux:
its $5'$-$3'$ distance equals twice the number of $\varnothing^{*}$
plus the number of $\varnothing$ shapes, i.e.~$2\times 2+4=8$.} \label{F:curve2}
\end{figure}
\begin{figure}[ht]
\centerline{%
\epsfig{file=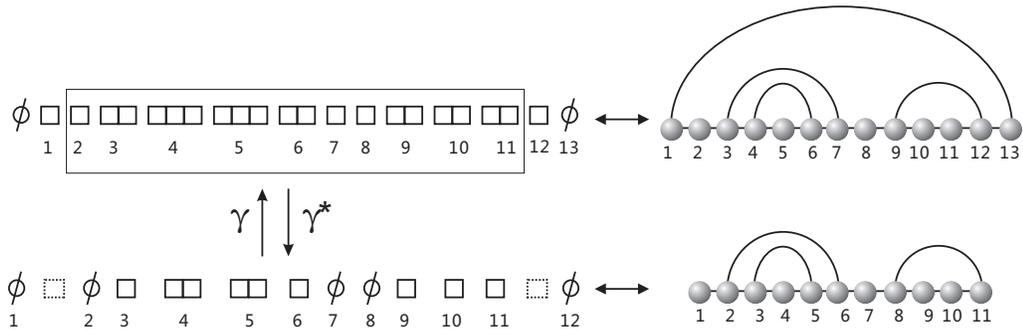,width=0.9\textwidth}\hskip15pt }
\caption{\small The mappings $\gamma$ and $\gamma^*$.} \label{F:mapIS}
\end{figure}

\begin{figure}[ht]
\centerline{%
\epsfig{file=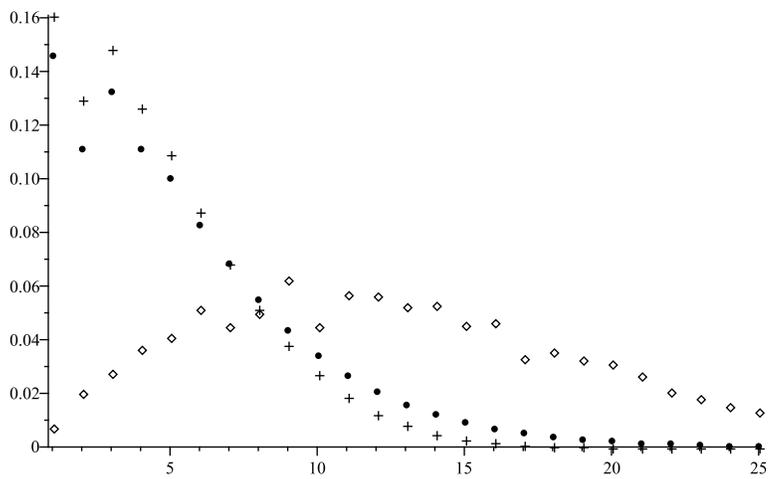,width=0.7\textwidth}\hskip15pt }
\caption{\small The $5'$-$3'$ distance of random structures and mfe-structures:
We display RNA secondary structures of length $30$ ($+$) and the limit
distribution ($\bullet$) as well as a sample of $5000$ mfe-structures
obtained from random sequences of length $100$ ($\diamond$).} \label{F:kk1}
\end{figure}
\begin{figure}[ht]
\centerline{%
\epsfig{file=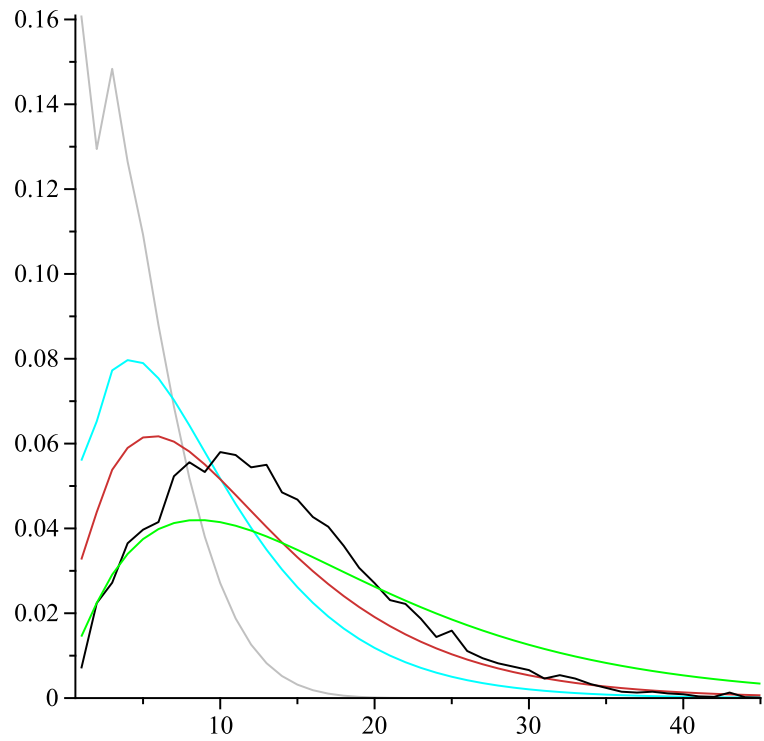,width=0.7\textwidth}\hskip15pt }
\caption{\small The $5'$-$3'$ limit distance distribution of $r$-canonical
RNA structures and mfe-structures:
We display limit distance distribution of $r$-canonical RNA structures
of length $45$: (gray line: $r=1$), (cyan line: r=3), (orange line: r=5),
(green line: r=10)
as well as a sample of $10000$ mfe-structures
obtained from random sequences of length $100$ (black line).} \label{F:kkk}
\end{figure}

\end{document}